\begin{document}
\parskip=5pt

\newtheorem{theorem}{Theorem}
\newtheorem{acknowledgment}[theorem]{Acknowledgment}
\newtheorem{corollary}[theorem]{Corollary}
\newtheorem{definition}[theorem]{Definition}
\newtheorem{example}[theorem]{Example}
\newtheorem{lemma}[theorem]{Lemma}
\newtheorem{notation}[theorem]{Notation}
\newtheorem{proposition}[theorem]{Proposition}
\newtheorem{remark}[theorem]{Remark}
\newtheorem{setting}[theorem]{Setting}

\numberwithin{theorem}{section}
\numberwithin{equation}{section}

\newcommand{\1}{{\bf 1}}
\newcommand{\Ad}{{\rm Ad}}
\newcommand{\Alg}{{\rm Alg}\,}
\newcommand{\Aut}{{\rm Aut}\,}
\newcommand{\ad}{{\rm ad}}
\newcommand{\Borel}{{\rm Borel}}
\newcommand{\card}{{\rm card}\,}
\newcommand{\Ci}{{\mathscr C}^\infty}
\newcommand{\Cpol}{{\mathscr C}^\infty_{\rm pol}}
\newcommand{\Der}{{\rm Der}\,}
\newcommand{\de}{{\rm d}}
\newcommand{\ee}{{\rm e}}
\newcommand{\End}{{\rm End}\,}
\newcommand{\ev}{{\rm ev}}
\newcommand{\id}{{\rm id}}
\newcommand{\ie}{{\rm i}}
\newcommand{\GL}{{\rm GL}}
\newcommand{\gl}{{{\mathfrak g}{\mathfrak l}}}
\newcommand{\Hom}{{\rm Hom}\,}
\newcommand{\Img}{{\rm Im}\,}
\newcommand{\Ind}{{\rm Ind}}
\newcommand{\Ker}{{\rm Ker}\,}
\newcommand{\Lie}{\text{\bf L}}
\newcommand{\m}{\text{\bf m}}
\newcommand{\pr}{{\rm pr}}
\newcommand{\Ran}{{\rm Ran}\,}
\renewcommand{\Re}{{\rm Re}\,}
\newcommand{\so}{\text{so}}
\newcommand{\spa}{{\rm span}\,}
\newcommand{\Tr}{{\rm Tr}\,}
\newcommand{\Op}{{\rm Op}}
\newcommand{\U}{{\rm U}}

\newcommand{\CC}{{\mathbb C}}
\newcommand{\RR}{{\mathbb R}}
\newcommand{\TT}{{\mathbb T}}

\newcommand{\Ac}{{\mathscr A}}
\newcommand{\Bc}{{\mathscr B}}
\newcommand{\Cc}{{\mathscr C}}
\newcommand{\Dc}{{\mathscr D}}
\newcommand{\Ec}{{\mathscr E}}
\newcommand{\Fc}{{\mathscr F}}
\newcommand{\Hc}{{\mathscr H}}
\newcommand{\Jc}{{\mathscr J}}
\newcommand{\Nc}{{\mathscr N}}
\newcommand{\Oc}{{\mathscr O}}
\newcommand{\Pc}{{\mathscr P}}
\newcommand{\Rc}{{\mathscr R}}
\newcommand{\Sc}{{\mathscr S}}
\newcommand{\Tc}{{\mathscr T}}
\newcommand{\Vc}{{\mathscr V}}
\newcommand{\Uc}{{\mathscr U}}
\newcommand{\Yc}{{\mathscr Y}}
\newcommand{\Wig}{{\mathscr W}}

\newcommand{\Bg}{{\mathfrak B}}
\newcommand{\Fg}{{\mathfrak F}}
\newcommand{\Gg}{{\mathfrak G}}
\newcommand{\Ig}{{\mathfrak I}}
\newcommand{\Jg}{{\mathfrak J}}
\newcommand{\Lg}{{\mathfrak L}}
\newcommand{\Pg}{{\mathfrak P}}
\newcommand{\Sg}{{\mathfrak S}}
\newcommand{\Xg}{{\mathfrak X}}
\newcommand{\Yg}{{\mathfrak Y}}
\newcommand{\Zg}{{\mathfrak Z}}

\newcommand{\ag}{{\mathfrak a}}
\newcommand{\bg}{{\mathfrak b}}
\newcommand{\dg}{{\mathfrak d}}
\renewcommand{\gg}{{\mathfrak g}}
\newcommand{\hg}{{\mathfrak h}}
\newcommand{\kg}{{\mathfrak k}}
\newcommand{\mg}{{\mathfrak m}}
\newcommand{\n}{{\mathfrak n}}
\newcommand{\og}{{\mathfrak o}}
\newcommand{\pg}{{\mathfrak p}}
\newcommand{\sg}{{\mathfrak s}}
\newcommand{\tg}{{\mathfrak t}}
\newcommand{\ug}{{\mathfrak u}}
\newcommand{\zg}{{\mathfrak z}}

\newcommand{\ZZ}{\mathbb Z}
\newcommand{\NN}{\mathbb N}
\newcommand{\BB}{\mathbb B}

\newcommand{\ep}{\varepsilon}

\newcommand{\hake}[1]{\langle #1 \rangle }

\newcommand{\scalar}[2]{\langle #1 ,#2 \rangle }
\newcommand{\vect}[2]{(#1_1 ,\ldots ,#1_{#2})}
\newcommand{\norm}[1]{\Vert #1 \Vert }
\newcommand{\normrum}[2]{{\norm {#1}}_{#2}}

\newcommand{\upp}[1]{^{(#1)}}
\newcommand{\p}{\partial}

\newcommand{\opn}{\operatorname}
\newcommand{\slim}{\operatornamewithlimits{s-lim\,}}
\newcommand{\sgn}{\operatorname{sgn}}

\newcommand{\seq}[2]{#1_1 ,\dots ,#1_{#2} }
\newcommand{\loc}{_{\opn{loc}}}

\title{A survey on Weyl calculus for representations of nilpotent Lie groups}

\author{Ingrid Belti\c t\u a and Daniel Belti\c t\u a}
\address{Institute of Mathematics ``Simion Stoilow'' 
of the Romanian Academy, 
P.O. Box 1-764, Bucharest, Romania}
\email{Ingrid.Beltita@imar.ro}
\email{Daniel.Beltita@imar.ro}
\keywords{Weyl calculus; magnetic field; nilpotent Lie group; semidirect product}
\subjclass[2000]{Primary 81S30; Secondary 22E25,22E27,35S05}

\begin{abstract}
 We survey some aspects of the pseudo-differential Weyl calculus 
for irreducible unitary representations of nilpotent Lie groups, 
ranging from the classical ideas to recently obtained results.  
The classical Weyl-H\"ormander calculus is recovered for 
the Schr\"odinger representation of the Heisenberg group. 
Our discussion concerns various extensions of this classical situation 
to arbitrary nilpotent Lie groups and to some infinite-dimensional Lie groups 
that allow us to handle the magnetic pseudo-differential calculus. 
\end{abstract}

\maketitle


\section{Introduction}\label{Sect1}

The Weyl calculus was first constructed in \cite{We28} for 
the purposes of quantum mechanics.  
It was afterwards investigated 
and extended to an abstract setting in \cite{An69} and \cite{An72}.  
This calculus was also taken up in \cite{Hor79} and made 
into a pseudo-differential calculus which plays a central role 
in the theory of partial differential equations and in many of 
its applications to mathematical physics. 

In the present paper we provide a brief discussion of the Weyl calculus 
and its later extensions in the framework provided by 
nilpotent Lie groups and their representation theory, ranging from 
some classical ideas of \cite{We28} and \cite{Hor79}, 
going through the important developments in \cite{Pe94}, 
and concluding by recent results from \cite{BB09a}, \cite{BB09b}, and \cite{BB09c}. 
We shall emphasize the role of the coadjoint orbits 
and the corresponding unitary representations 
as a natural background for the Weyl calculus. 
For instance the classical phase space ${\RR}^n\times{\RR}^n$ 
should be thought of as a coadjoint orbit of the Heisenberg group, 
corresponding to the Schr\"odinger representation, 
which leads to a transparent description of 
the classical Weyl calculus on ${\RR}^n$ 
as a very special case of the calculus constructed in \cite{Pe94} 
for arbitrary nilpotent Lie groups
(see Sections \ref{Sect2}~and~\ref{Sect3} 
below). 
The same idea allowed us to show in \cite{BB09a} that 
the magnetic pseudo-differential calculus of \cite{MP04} 
is governed by a certain infinite-dimensional Lie group, 
in the sense that it can be constructed as a Weyl quantization of 
a certain finite-dimensional coadjoint orbit of that group 
and the symbol spaces for the magnetic calculus are actually 
function spaces on that orbit
(see Section~\ref{Sect4}). 

For the sake of simplicity let us mention here that there 
exist other interesting lines of investigation on symbol calculi
for nilpotent Lie groups, which have however a different flavour and 
therefore we do not discuss them here; 
see for instance the papers \cite{Me83}, \cite{How84} and \cite{Gl07}. 
See also \cite{Mi82}, \cite{Mi86}, \cite{HRW84}, and \cite{Ma91} 
for examples and remarks on the relationship 
between a symbol calculus and a coadjoint orbit. 

\textbf{Notation.}
Throughout the paper we denote by $\Sc(\Vc)$ the Schwartz space 
on a finite-dimensional real vector space~$\Vc$. 
That is, $\Sc(\Vc)$ is the set of all smooth functions 
that decay faster than any polynomial together with 
their partial derivatives of arbitrary order. 
Its topological dual ---the space of tempered distributions on $\Vc$--- 
is denoted by $\Sc'(\Vc)$. 
We use the notation $\Cpol(\Vc)$ for the space 
of smooth functions that grow polynomially together with 
their partial derivatives of arbitrary order. 
In Section~\ref{Sect2} 
we shall also have the occasion to 
use these notions with $\Vc$ replaced by a coadjoint orbit 
of a nilpotent Lie group. 
In this situation we need the notion of polynomial structure 
on a manifold; see Sect.~1 in \cite{Pe89} for details. 
We use $\langle\cdot,\cdot\rangle$ to denote any duality pairing between 
finite-dimensional real vector spaces whose meaning is clear 
from the context. 
The Lebesgue measures on linear spaces (or Liouville measures 
on coadjoint orbits) and on their duals 
will always be normalized such that 
the corresponding Fourier transforms are unitary operators 
on the $L^2$-spaces.

\section{The classical pseudo-differential Weyl calculus on ${\RR}^n$}\label{Sect2}

\subsection{The earlier perspective on the Weyl calculus}
The Weyl calculus was introduced in \cite{We28} as 
a quantization procedure, that is, 
a natural correspondence between 
the classical observables and the quantum ones. 
More specifically, let $P_1,\dots,P_n$ be 
the quantum \emph{momentum operators} and $Q_1,\dots,Q_n$ 
the quantum \emph{position operators} in $L^2({\RR}^n)$. 
Recall that for $f\in\Sc({\RR}^n)$  and 
a generic point $q=(q_1,\dots,q_n)\in{\RR}^n$ we have 
for $j=1,\dots,n$, 
$$(Q_jf)(q)=q_jf(q) \text{ and }
P_jf=\frac{1}{\ie}\frac{\partial f}{\partial q_j}.$$
One of the remarkable properties of these operators is 
that for arbitrary points $p=(p_1,\dots,p_n)\in{\RR}^n$ and 
$q=(q_1,\dots,q_n)\in{\RR}^n$ the linear combination 
$$p\cdot Q+q\cdot P:=p_1Q_1+\cdots+p_nQ_n+q_1P_1+\cdots+q_nP_n $$
defines a self-adjoint operator in $L^2({\RR}^n)$, 
which in turn gives rise to a unitary operator $\exp(\ie(p\cdot Q+q\cdot P))$ 
that leaves $\Sc({\RR}^n)$ invariant. 

Now we can use these remarks to make the following definition 
of the \emph{pseudo-differential Weyl-H\"ormander calculus} on ${\RR}^n$ 
(see~\cite{Hor79}). 

\begin{definition}\label{wh}
\normalfont
For every $a\in\Sc({\RR}^n\times{\RR}^n)$ 
we define the corresponding \emph{pseudo-differential operator} 
$a(Q,P)$ 
by 
\begin{equation}\label{wh_eq}
a(Q,P)f=\iint\limits_{{\RR}^n\times{\RR}^n}
\widehat{a}(p,q)\exp(\ie(p\cdot Q+q\cdot P))f
\,\de p\de q 
\end{equation}
for arbitrary $f\in\Sc({\RR}^n)$, where $\widehat{a}\in\Sc({\RR}^n\times{\RR}^n)$ 
stands for the Fourier transform of the \emph{symbol}~$a$. 
\qed
\end{definition}

\begin{remark}\label{wh_dual}
\normalfont
For every tempered distribution $a\in\Sc'({\RR}^n\times{\RR}^n)$ 
we can interpret the integral~\eqref{wh_eq} in the distributional sense 
and thus define the corresponding pseudo-differential operator 
$a(Q,P)\colon\Sc({\RR}^n)\to\Sc'({\RR}^n)$. 
One can also compute the distribution kernel of 
the latter operator. More specifically, 
we have  
\begin{equation}\label{hoermander}
(a(Q,P)f)(q)
=\iint\limits_{{\RR}^n\times{\RR}^n}
a\bigl(\frac{q+q'}{2},p\bigr)e^{i(q-q')\cdot p}f(q')\de p\, \de q'
\end{equation}
for $f\in\Sc({\RR}^n)$ 
(see \cite{Hor07} for more details).
\qed 
\end{remark}

\subsection{Weyl calculus from the perspective of the Heisenberg group} 
A particularly deep insight into the Weyl calculus 
of Definition~\ref{wh} comes from taking into account 
the commutation relations satisfied by the operators 
$Q_1,\dots,Q_n$ and $P_1,\dots,P_n$, namely 
$$[Q_j,Q_k]=[P_j,P_k]=0\text{ and }[Q_j,P_k]=\delta_{jk}\ie \cdot I, $$
where $\delta_{jk}$ stands for the Kronecker's delta 
and $I$ denotes the identity operator on $L^2({\RR}^n)$. 
It follows from these commutation relations that 
we actually have to deal with the Schr\"odinger representation 
of the Heisenberg group from the following definition. 

\begin{definition}\label{hhs}
\normalfont
For every integer $n\ge1$ let `$\cdot$' denote 
the Euclidean scalar product on ${\RR}^n$. 
We introduce the \emph{Heisenberg algebra} 
$\hg_{2n+1}={\RR}^n\times{\RR}^n\times{\RR}$ 
with the bracket 
$$[(q,p,t),(q',p',t')]=[(0,0,p\cdot q'-p'\cdot q)]. $$
The \emph{Heisenberg group} 
${\mathbb H}_{2n+1}$ is just $\hg_{2n+1}$ thought of as a group 
with the multiplication~$\ast$ 
defined by 
$$X\ast Y=X+Y+\frac{1}{2}[X,Y]. $$
The unit element is $0\in{\mathbb H}_{2n+1}$ 
and the inversion mapping given by $X^{-1}:=-X$.
\qed
\end{definition}

See also \cite{How80}, \cite{Fo89}, \cite{FG92} and Ch.~9 in \cite{Gr01} 
for discussions on the Heisenberg group and its importance 
for the harmonic analysis on ${\RR}^n$. 

To define the Schr\"odinger representation we first exhibit 
the Heisenberg group as a semidirect product and then use a natural representation 
of that semidirect product. 
The details are recorded in the following remark. 

\begin{remark}\label{sch}
\normalfont
Consider ${\RR}^{n+1}\simeq{\RR}^n\times{\RR}$ and the natural representation 
$$\rho\colon({\RR}^n,+)\to\End({\RR}^{n+1}),\quad 
\rho(q)(p,t)=(p, p\cdot q+t).$$ 
Then it is straightforward to check that the mapping 
$$\Psi\colon{\mathbb H}_{2n+1}\to{\RR}^{n+1}\rtimes_\rho{\RR}^n,\quad 
(q,p,t)\mapsto\bigl((p,\textstyle{\frac{1}{2}}p\cdot q+t),q\bigr)$$
is an isomorphism of Lie groups. 

Let $\Pc_1({\RR}^n)$ be the linear space of real polynomial functions 
of degree $\le1$ on ${\RR}^n$ and 
note that $\Pc_1({\RR}^n)$ is linearly isomorphic to ${\RR}^n\times{\RR}$, 
since for every 
$\varphi\in\Pc_1({\RR}^n)$ there exist uniquely determined $\xi\in{\RR}^n$ and $t\in{\RR}$ 
such that $\varphi(x)=\xi\cdot x+t$ for every $x\in{\RR}^n$. 
We also get a linear representation 
$$\rho\colon({\RR}^n,+)\to\End(\Pc_1({\RR}^n)),\quad 
\rho(q)\varphi=\varphi(q+\cdot)$$
and then ${\mathbb H}_{2n+1}\simeq\Pc_1({\RR}^n)\rtimes_\rho{\RR}^n$ in view of the above paragraph 
(see also Ex.~2.6 in~\cite{BB09a}). 

Note that the semidirect product $\Pc_1({\RR}^n)\rtimes_\rho{\RR}^n$ has a natural unitary 
representation on $L^2({\RR}^n)$ given for arbitrary $(\varphi,q)\in\Pc_1({\RR}^n)\rtimes_\rho{\RR}^n$ by  
$$(\forall f\in L^2({\RR}^n))\quad \pi(\varphi,q)f=\ee^{\ie\varphi(\cdot)}f(q+\cdot).$$ 
Then the above explicit isomorphisms 
${\mathbb H}_{2n+1}\simeq{\RR}^{n+1}\rtimes_\rho{\RR}^n\simeq\Pc_1({\RR}^n)\rtimes_\rho{\RR}^n$, 
lead to the unitary representation 
$\pi\colon{\mathbb H}_{2n+1}\to\Bc(L^2({\RR}^n))$ 
defined by  
\begin{equation}\label{sch_eq}
(\pi(q,p,t)f)(x)=\ee^{\ie(p\cdot x+\frac{1}{2}p\cdot q+t)}f(q+x) 
\text{ for a.e. }x\in{\RR}^n
\end{equation}
for arbitrary $f\in L^2({\RR}^n)$ and $(q,p,t)\in{\mathbb H}_{2n+1}$. 
This is the \emph{Schr\"odinger representation} of the Heisenberg group ${\mathbb H}_{2n+1}$.  
\qed
\end{remark}

The Schr\"odinger representation of the Heisenberg group
provides the natural background for 
the pseudo-differential Weyl calculus on ${\RR}^n$. 
To illustrate this idea, we shall describe a  
condition on the symbol $a\in\Sc'({\RR}^n\times{\RR}^n)$ 
which ensures that the pseudo-differential operator 
$\Op(a)$ is bounded on $L^2({\RR}^n)$. 
This $L^2$-boundedness theorem was obtained in \cite{GH99} and is stated 
in terms of the modulation spaces introduced below. 

\begin{definition}\label{mod_def}
\normalfont 
Let us denote by $\hake{\cdot,\cdot}\colon\Sc'({\RR}^n)\times\Sc({\RR}^n)\to\CC$ 
the usual duality pairing. 
Assume that $1\le r,s\le\infty$ and $\phi\in\Sc({\RR}^n)$ and define 
for every tempered distribution $b\in\Sc'({\RR}^n)$ the corresponding 
\emph{ambiguity function} 
$$\Ac_\phi b\colon{\RR}^n\times{\RR}^n\to\CC,\quad 
(\Ac_\phi b)(q,p)=\hake{b,\overline{\pi(q,p,0)\phi}} $$
and then 
$$\Vert b\Vert_{M^{r,s}_\phi}
=\Bigl(\int\limits_{{\RR}^n}\Bigl(\int\limits_{{\RR}^n}
\vert(\Ac_\phi b)(q,p)\vert^s\de q \Bigr)^{r/s}
\de p\Bigr)^{1/r}\in[0,\infty] $$
with the usual conventions if $r$ or $s$ is infinite. 
Then the space 
$$M^{r,s}({\RR}^n):=\{b\in\Sc'({\RR}^n)\mid\Vert b\Vert_{M^{r,s}_\phi}<\infty\}$$ 
does not depend on $\phi\in\Sc({\RR}^n)$ and 
is called a \emph{modulation space} on ${\RR}^n$. 
\qed
\end{definition}

\begin{theorem}\label{gh} 
For every $a\in M^{\infty,1}({\RR}^{2n})$ the corresponding pseudo-differential operator 
$\Op(a)$ is bounded on $L^2({\RR}^n)$. 
Moreover, for every $\phi\in\Sc({\RR}^{2n})$ there exists a constant $C_\phi>0$ 
such that if $a\in M^{\infty,1}({\RR}^{2n})$, then 
$\Vert\Op(a)\Vert\le C_\phi\Vert a\Vert_{M^{1,\infty}_\phi}$. 
\end{theorem}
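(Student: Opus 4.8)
The plan is to realize $\Op(a)$ as an integral superposition of the unitaries $\pi(q,p,0)$ furnished by the Schr\"odinger representation~\eqref{sch_eq}, to test this superposition against the continuous resolution of the identity attached to $\pi$, and thereby to turn the $L^2$-bound into a Schur estimate carried out entirely on the time-frequency side. Fix a window $g\in\Sc(\RR^n)$ with $\Vert g\Vert_2=1$; the orthogonality relations for the (square-integrable modulo center) representation $\pi$ give the reproducing formula $f=\iint_{\RR^n\times\RR^n}\hake{f,\pi(w)g}\,\pi(w)g\,\de w$ for every $f\in L^2(\RR^n)$, with $w\mapsto\hake{f,\pi(w)g}$ having $L^2(\RR^{2n})$-norm $\Vert f\Vert_2$. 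Inserting this expansion for both arguments of $\hake{\Op(a)f,h}$ yields
\begin{equation*}
\hake{\Op(a)f,h}=\iint\!\!\iint\hake{f,\pi(w)g}\,\overline{\hake{h,\pi(z)g}}\,K_a(z,w)\,\de z\,\de w,
\end{equation*}
where $K_a(z,w):=\hake{\Op(a)\pi(w)g,\pi(z)g}$. Hence $\Vert\Op(a)\Vert$ is dominated by the norm of the integral operator on $L^2(\RR^{2n})$ with kernel $K_a$, so by Schur's test it suffices to bound $\sup_z\int|K_a(z,w)|\,\de w$ and $\sup_w\int|K_a(z,w)|\,\de z$.

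The decisive step is to express $K_a$ through the symbol. Using the covariance of $\pi$ and the Weyl kernel formula~\eqref{hoermander}, a direct computation gives $K_a(z,w)=\hake{a,W(\pi(z)g,\pi(w)g)}$, where $W(\cdot,\cdot)$ is the cross-Wigner function on the phase space $\RR^{2n}$. The cross-Wigner function of two time-frequency translates of $g$ equals, up to a unimodular factor, the phase-space time-frequency shift of the \emph{fixed} Schwartz function $\Phi:=W(g,g)\in\Sc(\RR^{2n})$ by the point with translation part $\tfrac12(z+w)$ and modulation part $J(z-w)$, where $J$ is the canonical symplectic matrix of $\RR^{2n}$. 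Therefore $|K_a(z,w)|=|(\Ac_\Phi a)(q,p)|$, where $(z,w)\mapsto(q,p)$ is the linear isomorphism of $\RR^{2n}\times\RR^{2n}$ given by $q=-\tfrac12(z+w)$ and $p=J(z-w)$, and $\Ac_\Phi a$ is exactly the ambiguity function of Definition~\ref{mod_def}, now formed on $\RR^{2n}$ with the window~$\Phi$.

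Now carry out the Schur test. In $\sup_w\int|K_a(z,w)|\,\de z$ keep $w$ fixed and pass to $p=J(z-w)$ as the new variable; since $J$ is symplectic the Jacobian is constant, $q$ becomes an affine function $\ell_w(p)$ of $p$, and
\begin{equation*}
\sup_w\int|K_a(z,w)|\,\de z = c\,\sup_w\int_{\RR^{2n}}|(\Ac_\Phi a)(\ell_w(p),p)|\,\de p \le c\int_{\RR^{2n}}\sup_{q}|(\Ac_\Phi a)(q,p)|\,\de p = c\,\Vert a\Vert_{M^{1,\infty}_\Phi},
\end{equation*}
and symmetrically for $\sup_z\int|K_a(z,w)|\,\de w$. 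Finally, the window $\Phi$ is traded for the window $\phi\in\Sc(\RR^{2n})$ of the statement by the standard equivalence of the modulation norms $\Vert\cdot\Vert_{M^{1,\infty}_\psi}$ as $\psi$ ranges over $\Sc(\RR^{2n})\setminus\{0\}$, which gives $\Vert a\Vert_{M^{1,\infty}_\Phi}\le C_\phi\Vert a\Vert_{M^{1,\infty}_\phi}$. Combined with the first paragraph, this yields both the $L^2$-boundedness of $\Op(a)$ and the asserted norm estimate.

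I expect the second step to be the main obstacle: one has to pin down the covariance formula for the cross-Wigner function with exactly the right normalization, so that $|K_a(z,w)|$ is literally a value of the ambiguity function of Definition~\ref{mod_def}; and in the third step one must use that the substitution $z\mapsto p=J(z-w)$ presents the plane over which $|\Ac_\Phi a|$ gets integrated as a graph over the modulation variable $p$. This transversality is what makes the $L^\infty$-in-translation, $L^1$-in-modulation norm the natural quantity here, and it is precisely where the \emph{symmetry} of the Weyl quantization, as opposed to a general quantization rule, is used. The same scheme goes through with the continuous expansion replaced by a Gabor frame $\{\pi(\lambda)g\}_{\lambda\in\Lambda}$ of $L^2(\RR^n)$, reducing matters to Schur's test for the discrete Gabor matrix $\bigl(K_a(\lambda,\mu)\bigr)_{\lambda,\mu\in\Lambda}$; the only extra ingredient is then that the sum of $|\Ac_\Phi a|$ over the lattice $\Lambda$ is dominated by its $L^1$-in-modulation norm because $\Phi$ is Schwartz, and this is essentially the route taken in \cite{GH99}.
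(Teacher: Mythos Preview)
Your sketch is correct and is precisely the argument of Gr\"ochenig--Heil that the paper cites without reproducing: the paper's own ``proof'' is nothing more than the reference to Th.~1.1 in \cite{GH99}. The three ingredients you isolate---the Weyl--Wigner duality $\hake{\Op(a)\phi,\psi}=\hake{a,W(\psi,\phi)}$, the covariance $W(\pi(z)g,\pi(w)g)=c(z,w)\,M_{J(z-w)}T_{(z+w)/2}W(g,g)$ with $|c(z,w)|=1$, and the Schur test after the change of variables $(z,w)\mapsto\bigl(\tfrac{z+w}{2},J(z-w)\bigr)$---are exactly those of \cite{GH99} (see also Ch.~14 of \cite{Gr01}), and your closing remark about the discrete Gabor-frame variant accurately describes how the original paper is organized.
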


\begin{proof}
See Th.~1.1 in \cite{GH99}.  
\end{proof}

\section{Weyl calculus for irreducible unitary representations}\label{Sect3}

We shall briefly describe some of the remarkable results 
obtained in \cite{Pe94} (relying on \cite{Pe84}, \cite{Pe88}, and \cite{Pe89})) on the Weyl calculus for irreducible unitary representations 
of nilpotent Lie groups.

\subsection{Preduals for coadjoint orbits}

\begin{setting}\label{predual_sett}
\normalfont 
We shall use the following notation:
\begin{enumerate}
 \item Let $G$ be a connected, simply connected, nilpotent Lie group with the Lie algebra~$\gg$.  
 Then the exponential map $\exp_G\colon\gg\to G$ is a diffeomorphism 
 with the inverse denoted by $\log_G\colon G\to\gg$. 
 \item We denote by $\gg^*$ the linear dual space to $\gg$ and 
  by $\hake{\cdot,\cdot}\colon\gg^*\times\gg\to{\RR}$ the natural duality pairing. 
 \item Let $\xi_0\in\gg^*$ with the corresponding coadjoint orbit $\Oc:=\Ad_G^*(G)\xi_0\subseteq\gg^*$.  
 \item The \emph{isotropy group} at $\xi_0$ is $G_{\xi_0}:=\{g\in G\mid\Ad_G^*(g)\xi_0=\xi_0\}$ 
 with the corresponding \emph{isotropy Lie algebra} $\gg_{\xi_0}=\{X\in\gg\mid\xi_0\circ\ad_{\gg}X=0\}$. 
 If we denote the \emph{center} of $\gg$ by $\zg:=\{X\in\gg\mid[X,\gg]=\{0\}\}$, 
 then it is clear that $\zg\subseteq\gg_{\xi_0}$. 
 \item Let $n:=\dim\gg$ and fix a sequence of ideals in $\gg$, 
$$\{0\}=\gg_0\subset\gg_1\subset\cdots\subset\gg_n=\gg$$
such that $\dim(\gg_j/\gg_{j-1})=1$ and $[\gg,\gg_j]\subseteq\gg_{j-1}$ 
for $j=1,\dots,n$. 
 \item Pick any $X_j\in\gg_j\setminus\gg_{j-1}$ for $j=1,\dots,n$, 
so that the set $\{X_1,\dots,X_n\}$ will be a \emph{Jordan-H\"older basis} in~$\gg$. 
\end{enumerate}
\qed 
\end{setting}

\begin{definition}\label{jump_def}
\normalfont
 Consider the set of \emph{jump indices} of the coadjoint orbit $\Oc$ 
with respect to the aforementioned Jordan-H\"older basis $\{X_1,\dots,X_n\}\subset\gg$, 
$$e:=\{j\in\{1,\dots,n\}\mid \gg_j\not\subseteq\gg_{j-1}+\gg_{\xi_0}\}
=\{j\in\{1,\dots,n\}\mid X_j\not\in\gg_{j-1}+\gg_{\xi_0}\}$$ 
and then define the corresponding \emph{predual of the coadjoint orbit}~$\Oc$, 
$$\gg_e:=\spa\{X_j\mid j\in J_{\xi_0}\}\subseteq\gg.$$
We note the direct sum decomposition $\gg=\gg_{\xi_0}\dotplus\gg_e$. 
\qed 
\end{definition}

\begin{remark}\label{jump_remark}
\normalfont
Let $\{\xi_1,\dots,\xi_n\}\subset\gg^*$ be the dual basis for $\{X_1,\dots,X_n\}\subset\gg$. 
Then the coadjoint orbit $\Oc$ can be described in terms of the jump indices 
mentioned in Definition~\ref{jump_def}. 
More specifically, if we denote 
$$\gg^*_{\Oc}:=\spa\{\xi_j\mid j\in e\} 
\quad\text{ and }\quad\gg^\perp_{\Oc}:=\spa\{\xi_j\mid j\not\in e\},$$ 
then the coadjoint orbit  
$\Oc\subset\gg^*\simeq\gg_e^*\times\gg_e^\perp$ 
is the graph of a certain \emph{polynomial mapping} $\gg_e^*\to\gg_e^\perp$. 
This leads to the following pieces of information on $\Oc$: 
\begin{enumerate}
 \item $\dim\Oc=\dim\gg_e=\card e=:d$;  
 \item if we let $j_1<\cdots<j_d$ such that $e=\{j_1,\dots,j_d\}$, 
then the mapping 
$$\Oc\to{\RR}^d,\quad \xi\to(\hake{\xi,X_{j_1}},\dots,\hake{\xi,X_{j_d}}) $$
is a global chart which takes the Liouville measure of $\Oc$ 
to a Lebesgue measure on~${\RR}^d$. 
\end{enumerate}
See for instance Lemma~1.6.1 in \cite{Pe89} for more details and proofs 
for these assertions. 
\qed
\end{remark}

\subsection{Weyl calculus for unitary irreducible representations}
\begin{setting}\label{calculus_sett}
\normalfont
In addition to Setting~\ref{predual_sett} 
we now fix  some further notation: 
\begin{enumerate}
 \item Let $\pi\colon G\to\Bc(\Hc)$ be any unitary irreducible representations 
associated with the coadjoint orbit $\Oc$ by Kirillov's theorem (\cite{Ki62}). 
\item We define the Fourier transform $\Sc(\Oc)\to\Sc(\gg_e)$ by 
$$(\forall X\in\gg_e)\quad \widehat{a}(X)=\int\limits_{\Oc}\ee^{-\ie\hake{\xi,X}}a(\xi)\de\xi  $$
for every $a\in\Sc(\Oc)$, where $\de\xi$ stands for a Liouville measure on $\Oc$ 
(see also Remark~\ref{jump_remark}). 
This Fourier transform is invertible (Lemma 4.1.1 in \cite{Pe94}) 
and its inverse is denoted by 
$\Sc(\gg_e)\to\Sc(\Oc)$, $a\mapsto \check a$. 
\end{enumerate}
\qed 
\end{setting}

\begin{remark}\label{smoothness}
\normalfont
Let us consider the space of \emph{smooth vectors} for the representation~$\pi$, 
$$\Hc_\infty:=\{v\in\Hc\mid \pi(\cdot)v\in\Ci(G,\Hc)\}. $$
Then $\Hc_\infty$ has a natural structure of Fr\'echet space 
which carries the \emph{derivate representation} $\de\pi\colon\gg\to\End(\Hc_\infty)$. 
The latter map is a homomorphism of Lie algebras defined by 
$$(\forall X\in\gg,v\in\Hc_\infty)\quad\de\pi(X)v=\frac{\de}{\de t}\Big{\vert}_{t=0}\pi(\exp_G(tX))v. $$
Now let us denote by $\Sg_p(\Hc)$ the Schatten ideals of operators on $\Hc$ for $1\le p\le\infty$. 
Consider the unitary representation 
$\Pi\colon G\times G\to\Bc(\Sg_2(\Hc))$ defined by 
$$(\forall g_1,g_2\in G)(\forall T\in\Sg_2(\Hc))\quad 
\Pi(g_1,g_2)T=\pi(g_1)T\pi(g_2)^{-1}.$$
It is not difficult to see that $\Pi$ is strongly continuous.  
The corresponding space of smooth vectors is denoted by $\Bc(\Hc)_\infty$ 
and is called the space of \emph{smooth operators} for the representation~$\pi$. 
One can prove that actually $\Bc(\Hc)_\infty\subseteq\Sg_1(\Hc)$. 

For an alternative description of $\Bc(\Hc)_\infty$ 
let $\gg_{\CC}:=\gg\otimes_{{\RR}}\CC$ be the complexification of $\gg$ 
with the corresponding universal associative enveloping algebra $\U(\gg_{\CC})$. 
Then the aforementioned homomorphism of Lie algebras $\de\pi$ has a unique extension 
to a homomorphism of unital associative algebras $\de\pi\colon\U(\gg_{\CC})\to\End(\Hc_\infty)$. 
One can prove that for $T\in\Bc(\Hc)$ we have $T\in\Bc(\Hc)_\infty$ if and only if  $T(\Hc)+T^*(\Hc)\subseteq\Hc_\infty$ and $\de\pi(u)T,\de\pi(u)T^*\in\Bc(\Hc)$ 
for every $u\in\U(\gg_{\CC})$. 
(See subsect.~1.2 in \cite{Pe94}.)
\qed
\end{remark}

\begin{definition}\label{calc_def}
\normalfont 
The \emph{Weyl calculus} $\Op^\pi(\cdot)$ for the unitary representation~$\pi$ is defined 
for every $a\in\Sc(\Oc)$ by 
$$\Op^\pi(a)=\int\limits_{\gg_e}\widehat{a}(X)\pi(\exp_GX)\de X\in\Bc(\Hc). $$
We call $\Op^\pi(a)$ is the \emph{pseudo-differential operator} with 
the \emph{symbol} $a\in\Sc(\Oc)$. 
\qed 
\end{definition}

\begin{theorem}\label{pedersen}
The Weyl calculus has the following properties: 
\begin{enumerate}
\item\label{pedersen_item1} For every symbol $a\in\Sc(\Oc)$ we have $\Op^\pi(a)\in\Bc(\Hc)_\infty$ 
and the mapping $\Sc(\Oc)\to\Bc(\Hc)_\infty$, $a\mapsto\Op^\pi(a)$ is 
a linear topological isomorphism. 
\item\label{pedersen_item1bis} 
For every $T\in\Bc(\Hc)_\infty$ we have $T=\Op^\pi(a)$, 
where $a\in\Sc(\Oc)$ satisfies the condition $\widehat{a}(X)=\Tr(\pi(\exp_G X)^{-1}A)$ for every $X\in\gg_e$. 
\item\label{pedersen_item2} For every $a,b\in\Sc(\Oc)$ we have 
\begin{enumerate}
\item\label{pedersen_item2a} $\Op^\pi(\bar{a})=\Op^\pi(a)^*$; 
\item\label{pedersen_item2b} $\Tr(\Op^\pi(a))=\int\limits_{\Oc}a(\xi)\de\xi$; 
\item\label{pedersen_item2c} $\Tr(\Op^\pi(a)\Op^\pi(b))=\int\limits_{\Oc}a(\xi)b(\xi)\de\xi$;
\item\label{pedersen_item2d} $\Tr(\Op^\pi(a)\Op^\pi(b)^*)=\int\limits_{\Oc}a(\xi)\overline{b(\xi)}\de\xi$. 
\end{enumerate}
\end{enumerate}
\end{theorem}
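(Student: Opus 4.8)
The plan is to reduce everything to the two basic identities underlying the Weyl calculus, namely the relation between $\Op^\pi$ and the operator-valued Fourier transform $X\mapsto\pi(\exp_G X)$ on $\gg_e$, and the Plancherel-type formula for the representation $\pi$ associated with $\Oc$ by Kirillov's theorem. All the assertions are stated in \cite{Pe94}, so at the level of this survey the task is to assemble them coherently; I would organize the argument so that (1) and (1bis) come first and the trace formulas in (2) follow as corollaries.

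First I would treat item~(1). The map $a\mapsto\widehat a$ is a topological isomorphism $\Sc(\Oc)\to\Sc(\gg_e)$ by Setting~\ref{calculus_sett}(2) (Lemma~4.1.1 in \cite{Pe94}), so it suffices to show that $b\mapsto\int_{\gg_e}b(X)\pi(\exp_G X)\de X$ is a topological isomorphism $\Sc(\gg_e)\to\Bc(\Hc)_\infty$. Continuity into $\Bc(\Hc)$ is immediate since $\Vert\pi(\exp_G X)\Vert=1$; to land in $\Bc(\Hc)_\infty$ one differentiates under the integral sign, using that $\de\pi(u)\bigl(\pi(\exp_G X)\bigr)$ is, for each $u\in\U(\gg_{\CC})$, a polynomial in $X$ times $\pi(\exp_G X)$ (because $G$ is nilpotent and $\exp_G$ is polynomial), so Schwartz decay of $b$ absorbs the polynomial growth and one gets estimates in every seminorm of $\Bc(\Hc)_\infty$ via the description in Remark~\ref{smoothness}. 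For injectivity and surjectivity I would invoke the explicit inversion: given $T\in\Bc(\Hc)_\infty\subseteq\Sg_1(\Hc)$, the function $X\mapsto\Tr(\pi(\exp_G X)^{-1}T)$ on $\gg_e$ is shown in \cite{Pe94} to lie in $\Sc(\gg_e)$, and pairing it back recovers $T$; this simultaneously proves item~(1bis), which is just the formula $\widehat a(X)=\Tr(\pi(\exp_G X)^{-1}A)$ read off from that inversion (here $A$ denotes $\Op^\pi(a)=T$). The open mapping theorem then upgrades the algebraic bijection with continuous inverse to a topological isomorphism.

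Next, item~(2a) is a direct computation: $\Op^\pi(\bar a)=\int_{\gg_e}\widehat{\bar a}(X)\pi(\exp_G X)\de X$, and since $\widehat{\bar a}(X)=\overline{\widehat a(-X)}$ while $\pi(\exp_G(-X))=\pi(\exp_G X)^*$, the substitution $X\mapsto -X$ together with unimodularity of $(\gg_e,+)$ gives exactly $\Op^\pi(a)^*$. For (2b) I would compute $\Tr\Op^\pi(a)$ by moving the trace inside the integral (legitimate because the integrand is trace-class-valued and Bochner integrable, by item~(1)); this produces $\int_{\gg_e}\widehat a(X)\,\Tr(\pi(\exp_G X))\de X$, and the distributional identity $\Tr(\pi(\exp_G\cdot))=$ (the pullback to $\gg_e$ of) the Liouville measure on $\Oc$ under Fourier transform — i.e. the character formula for $\pi$, Kirillov theory — turns this into the Fourier inversion $\int_{\Oc}a(\xi)\de\xi$, using the normalization of measures fixed in the Notation paragraph. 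Then (2c) follows from (2b) applied to the product once one knows $\Op^\pi(a)\Op^\pi(b)=\Op^\pi(a\# b)$ for a suitable $\#$ with $\int_\Oc(a\# b)=\int_\Oc ab$; alternatively, and more cleanly for a survey, (2c) is the polarization of the statement that $a\mapsto\Op^\pi(a)$ extends to a unitary $L^2(\Oc)\to\Sg_2(\Hc)$, which is again a repackaging of the Plancherel theorem for $\pi$. Finally (2d) is (2c) combined with (2a): $\Tr(\Op^\pi(a)\Op^\pi(b)^*)=\Tr(\Op^\pi(a)\Op^\pi(\bar b))=\int_\Oc a(\xi)\overline{b(\xi)}\de\xi$.

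The main obstacle is item~(1), specifically the claim that the inverse map $T\mapsto\widehat a$ actually produces a \emph{Schwartz} function on $\gg_e$ and that the correspondence is bicontinuous for the Fr\'echet topologies in play: this is where the nilpotency is used in an essential, non-formal way (through the polynomiality of $\exp_G$, the stratification by the Jordan–H\"older basis, and the structure of $\Bc(\Hc)_\infty$ as a projective limit), and it is the content of the hard lemmas of \cite{Pe84,Pe88,Pe89} that \cite{Pe94} builds on. In a survey I would state this carefully and cite those sources rather than reprove it; everything in item~(2) is then soft, following from (1), (1bis), the character/Plancherel formula, and elementary manipulations of the defining integral.
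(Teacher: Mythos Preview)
Your sketch is mathematically sound and identifies the right ingredients: the Fourier isomorphism $\Sc(\Oc)\simeq\Sc(\gg_e)$, differentiation under the integral using polynomiality of $\exp_G$, the inversion via $X\mapsto\Tr(\pi(\exp_G X)^{-1}T)$, and then the trace formulas as consequences of the Kirillov character formula and Plancherel unitarity. The paper, however, does not attempt any of this: its entire proof is the one-line citation ``See Th.~4.1.4 and Th.~2.2.7 in \cite{Pe94}.'' You anticipated exactly this at the end of your proposal, so you and the authors agree on what belongs in a survey; you simply supply more explanatory scaffolding around the citation than they do.

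One small correction worth flagging in your sketch: in (2b) you justify interchanging trace and integral by saying ``the integrand is trace-class-valued,'' but each $\widehat a(X)\pi(\exp_G X)$ is a scalar multiple of a unitary and is certainly not in $\Sg_1(\Hc)$. The honest argument is the one you give in the next breath: the character $\Tr(\pi(\exp_G\cdot))$ exists only as a distribution on $\gg_e$, and (2b) is precisely Fourier inversion against that distribution (Kirillov's character formula). So drop the Bochner-integrability remark and go straight to the distributional identity.
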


\begin{proof}
See Th.~4.1.4 and Th.~2.2.7 in \cite{Pe94}. 
\end{proof}

\begin{remark}\label{calc_dual}
\normalfont 
Let $\Bc(\Hc)_\infty^*$ be the topological dual of the Fr\'echet space $\Bc(\Hc)_\infty$ 
and denote by $\hake{\cdot,\cdot}$ either of the duality pairings 
$$\Bc(\Hc)_\infty^*\times\Bc(\Hc)_\infty\to\CC 
\text{ and }\Sc'(\Oc)\times\Sc(\Oc)\to\CC.$$ 
Then for every tempered distribution $a\in\Sc'(\Oc)$ we can use Theorem~\ref{pedersen}\eqref{pedersen_item1} 
to define 
$\Op^\pi(a)\in\Bc(\Hc)_\infty^*$ such that  
$$(\forall b\in\Sc(\Oc))\quad \hake{\Op^\pi(a),\Op^\pi(b)}=\hake{a,b} $$
Just as in Definition~\ref{calc_def} we call $\Op^\pi(a)$ the \emph{pseudo-differential operator} with 
the \emph{symbol} $a\in\Sc'(\Oc)$. 
Note that if actually $a\in\Sc(\Oc)$, 
then the present notation agrees with Definition~\ref{calc_def} 
because of Theorem~\ref{pedersen}\eqref{pedersen_item2c}. 

The continuity properties of the above pseudo-differential operators 
can be investigated by using modulation spaces of symbols; see \cite{BB09c} for details. 
Specifically, one can extend Definition~\ref{mod_def} in order 
to introduce modulation spaces $M^{r,s}_\phi(\pi)$ for every unitary irreducible representation 
$\pi\colon G\to\Bc(\Hc)$.  
We always have $\Hc_\infty\subseteq M^{r,s}_\phi(\pi)$ 
and $M^{2,2}_\phi(\pi)=\Hc$. 
If the representation $\pi$ is square-integrable modulo the center of $G$ 
and $\Oc$ is the corresponding coadjoint orbit, 
then there exists a natural representation $\pi^{\#}\colon G\ltimes G\to\Bc(L^2(\Oc))$ 
such that the Weyl calculus $\Op^\pi(\cdot)$ defines a continuous linear mapping 
from the modulation space $M^{\infty,1}(\pi^{\#})$ into the space of \emph{bounded} linear operators 
on~$\Hc$. 
Theorem~\ref{gh} is recovered in the special case when $\pi$ is the Schr\"odinger representation 
of the Heisenberg group ${\mathbb H}_{2n+1}$. 
\qed 
\end{remark}

In the following statement we shall use the notation
$$(\forall X\in\gg)\quad \psi^X\colon\Oc\to\CC,\quad \psi^X(\xi)=\ie\hake{\xi,X}. $$
For every integer $m\ge0$ and every $X\in\gg$ 
the $m$-th power $(\psi^X)^m$ can be thought of as an element in $\Sc'(\Oc)$ 
in the usual way. 

\begin{theorem}\label{pedersen_dual} 
The Weyl calculus with symbols in $\Sc'(\Oc)$ has the following properties: 
\begin{enumerate}
\item\label{pedersen_dual_item1} 
The mapping $\Sc'(\Oc)\to\Bc(\Hc)_\infty^*$, $a\mapsto\Op^\pi(a)$ is 
a linear topological isomorphism. 
\item\label{pedersen_dual_item2} 
For every $X\in\gg$ we have 
$\Op^\pi(\psi^X)=\de\pi(X)$. 
\item\label{pedersen_dual_item3} 
If $Y\in\gg_e$, then for every integer $m\ge0$ we have $\Op^\pi((\psi^Y)^m)=\de\pi(Y)^m$.
\end{enumerate}
\end{theorem}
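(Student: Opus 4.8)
The plan is to treat the three items in order, deriving each from the tools already assembled: Definition~\ref{calc_def}, Theorem~\ref{pedersen}, and Remark~\ref{calc_dual}. For item~\eqref{pedersen_dual_item1}, I would argue by duality. Theorem~\ref{pedersen}\eqref{pedersen_item1} gives a linear topological isomorphism $\Sc(\Oc)\to\Bc(\Hc)_\infty$, $b\mapsto\Op^\pi(b)$. Passing to topological duals, the transpose is a linear topological isomorphism $\Bc(\Hc)_\infty^*\to\Sc'(\Oc)$. The definition in Remark~\ref{calc_dual}, namely $\hake{\Op^\pi(a),\Op^\pi(b)}=\hake{a,b}$ for all $b\in\Sc(\Oc)$, says precisely that $a\mapsto\Op^\pi(a)$ is the inverse of that transpose; hence it is itself a linear topological isomorphism $\Sc'(\Oc)\to\Bc(\Hc)_\infty^*$. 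One should check compatibility on the overlap $\Sc(\Oc)$, but this is exactly the remark made after Remark~\ref{calc_dual} using Theorem~\ref{pedersen}\eqref{pedersen_item2c}.

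For item~\eqref{pedersen_dual_item2}, the idea is to test the distributional identity $\Op^\pi(\psi^X)=\de\pi(X)$ against symbols $b\in\Sc(\Oc)$ and use item~\eqref{pedersen_dual_item1} together with the trace formulas of Theorem~\ref{pedersen}. By definition one has $\hake{\Op^\pi(\psi^X),\Op^\pi(b)}=\hake{\psi^X,b}=\ie\int_{\Oc}\hake{\xi,X}b(\xi)\de\xi$. On the other hand, $\de\pi(X)\Op^\pi(b)$ should again be a smooth operator, and one computes its trace: differentiating the formula $\Op^\pi(b)=\int_{\gg_e}\widehat b(Y)\pi(\exp_G Y)\de Y$ and using $\frac{\de}{\de t}\big\vert_{t=0}\pi(\exp_G(tX))\pi(\exp_G Y)$ together with $\Tr\,\Op^\pi(b)=\int_\Oc b$, one arrives at $\Tr(\de\pi(X)\Op^\pi(b))=\ie\int_\Oc\hake{\xi,X}b(\xi)\de\xi$. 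Here the natural pairing $\hake{\Op^\pi(a),T}$ on $\Bc(\Hc)_\infty^*\times\Bc(\Hc)_\infty$ should be identified, via Theorem~\ref{pedersen}\eqref{pedersen_item2c}--\eqref{pedersen_item2d}, with a trace pairing, so that $\hake{\de\pi(X),\Op^\pi(b)}$ equals the trace just computed. Since $\Op^\pi(b)$ ranges over all of $\Bc(\Hc)_\infty$ as $b$ ranges over $\Sc(\Oc)$, the two functionals agree, and item~\eqref{pedersen_dual_item1} gives $\Op^\pi(\psi^X)=\de\pi(X)$.

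For item~\eqref{pedersen_dual_item3}, the plan is induction on $m$, the base case $m=0$ being the identity $\Op^\pi(1)=I$ (which follows from Theorem~\ref{pedersen}\eqref{pedersen_item2b}, or from item~\eqref{pedersen_dual_item2} read at $X=0$ after a normalization check). For the inductive step one wants $\Op^\pi((\psi^Y)^{m+1})=\de\pi(Y)\Op^\pi((\psi^Y)^m)$; granting that, the inductive hypothesis finishes it. This reduces item~\eqref{pedersen_dual_item3} to the multiplicativity statement that, for $Y\in\gg_e$, multiplication by $\psi^Y$ on the symbol side corresponds to left composition with $\de\pi(Y)$ on the operator side. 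That in turn should come from the same trace computation as in item~\eqref{pedersen_dual_item2}: testing against $b\in\Sc(\Oc)$, one has $\hake{\Op^\pi(\psi^Y\cdot(\psi^Y)^m),\Op^\pi(b)}=\ie\int_\Oc\hake{\xi,Y}(\psi^Y(\xi))^m b(\xi)\de\xi$, and on the operator side $\Tr(\de\pi(Y)\Op^\pi((\psi^Y)^m b))$ — using that $(\psi^Y)^m b\in\Sc(\Oc)$ when $Y\in\gg_e$, so $\Op^\pi((\psi^Y)^m b)\in\Bc(\Hc)_\infty$ and $\de\pi(Y)$ maps $\Bc(\Hc)_\infty$ into itself — yields the same integral, again by the trace-derivative identity. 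The restriction $Y\in\gg_e$ is exactly what guarantees $(\psi^Y)^m b$ lies in the Schwartz class on $\Oc$ (one uses Remark~\ref{jump_remark}: along the $\gg_e$-directions $\hake{\xi,Y}$ is a coordinate, not a polynomial of higher degree in the others), so that the finite-operator manipulations are legitimate.

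The main obstacle will be item~\eqref{pedersen_dual_item2}: making the trace-derivative computation $\Tr(\de\pi(X)\Op^\pi(b))=\ie\int_\Oc\hake{\xi,X}b\,\de\xi$ rigorous. One must justify differentiating under the operator integral $\int_{\gg_e}\widehat b(Y)\pi(\exp_G Y)\de Y$, control the trace through this (Theorem~\ref{pedersen}\eqref{pedersen_item1} places $\Op^\pi(b)$ in $\Sg_1(\Hc)$, and one needs $\de\pi(X)\Op^\pi(b)\in\Sg_1(\Hc)$ as well, which is part of the characterization of $\Bc(\Hc)_\infty$ in Remark~\ref{smoothness}), and identify the resulting Fourier-side expression; for $X\in\gg_e$ this is a differentiation of $\widehat{\,\cdot\,}$ under the inverse Fourier transform on $\gg_e$, while for general $X\in\gg$ one additionally uses the Jordan--H\"older splitting $\gg=\gg_{\xi_0}\dotplus\gg_e$ to reduce to that case. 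Once this identity is in hand, items~\eqref{pedersen_dual_item2} and~\eqref{pedersen_dual_item3} follow formally from item~\eqref{pedersen_dual_item1}.
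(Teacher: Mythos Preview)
The paper does not actually prove this theorem; its entire ``proof'' is the citation ``See Th.~4.1.4(7)--(8) in \cite{Pe94}.'' So there is no in-paper argument to compare against, and your attempt to derive the result from the earlier statements (Theorem~\ref{pedersen} and Remark~\ref{calc_dual}) is already going beyond what the survey does.

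Your argument for item~\eqref{pedersen_dual_item1} is correct and is the natural duality argument. Your overall strategy for items~\eqref{pedersen_dual_item2}--\eqref{pedersen_dual_item3} (test against $\Op^\pi(b)$ via the trace pairing and reduce to a Fourier identity on $\gg_e$) is also the right shape, and you correctly flag the trace-derivative computation as the crux. One point deserves more care than you give it: your last sentence proposes to handle general $X\in\gg$ by splitting $\gg=\gg_{\xi_0}\dotplus\gg_e$ and ``reducing to that case,'' but the component in $\gg_{\xi_0}$ is not a reduction to the $\gg_e$ case at all. For $X$ in the complementary span, $\psi^X|_\Oc$ is, by Remark~\ref{jump_remark}, a genuine polynomial (generally of degree $>1$) in the global coordinates $\hake{\xi,X_{j_k}}$, while $\de\pi(X)$ is a first-order operator; the identity $\Op^\pi(\psi^X)=\de\pi(X)$ in that direction therefore encodes nontrivial structure of the representation and of the orbit parametrization, and does not follow from the straightforward Fourier differentiation that works for $X\in\gg_e$. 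This is exactly where Pedersen's proof in \cite{Pe94} uses the specific machinery developed there (the precise symbol-to-operator inversion of Theorem~\ref{pedersen}\eqref{pedersen_item1bis} and the polynomial structure of the orbit), so your plan would still ultimately rest on those results rather than bypass them.
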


\begin{proof}
See Th.~4.1.4(7)--(8) in \cite{Pe94}. 
\end{proof}

\begin{definition}\label{moyal}
\normalfont 
Note that $\Bc(\Hc)_\infty$ is an involutive associative subalgebra of $\Bc(\Hc)$ 
as an easy consequence of the alternative description in Remark~\ref{smoothness}. 
It then follows by Theorem~\ref{pedersen}\eqref{pedersen_item1} 
that there exists an uniquely defined bilinear associative \emph{Moyal product} 
$$\Sc(\Oc)\times\Sc(\Oc)\to\Sc(\Oc),\quad (a,b)\mapsto a\#^\pi b $$
such that 
$$(\forall a,b\in\Sc(\Oc))\quad \Op^\pi(a\#^\pi b)=\Op^\pi(a)\Op^\pi(b). $$
Thus $\Sc(\Oc)$ is made into an involutive associative algebra 
such that the mapping $\Sc(\Oc)\to\Bc(\Hc)_\infty$, $a\mapsto\Op^\pi(a)$ 
is an algebra isomorphism. 
\qed
\end{definition}

\begin{remark}\label{maillard}
\normalfont
It was proved in \cite{Ma07} that if $\pi$ is a square-integrable representation, 
then the Moyal product $\#^\pi$ is a star product and an explicit formal expansion 
was obtained. 
\qed
\end{remark}

\begin{remark}\label{remark3.12}
\normalfont
In the case of the Heisenberg group (see Definition~\ref{hhs}), 
let us perform the identification $\hg_{2n+1}^*\simeq\RR^n\times\RR^n\times\RR$ 
by means of the Euclidean structure of $\RR^{2n+1}$.  
Then the only nontrivial coadjoint orbits are of the form 
$\Oc=\RR^n\times\RR^n\times\{t\}$ 
with $t\in\RR\setminus\{0\}$, and $\Sc(\Oc)$ can be naturally identified 
with $\Sc(\RR^n\times\RR^n)$. 
When $t=1$,  the corresponding irreducible representation of ${\mathbb H}_{2n+1}$ 
is given by \eqref{sch_eq}, while the predual to the coadjoint orbit 
can be identified to $\RR^n\times\RR^n$. 
Then it is easy to see that 
$$a(Q,P)=\Op^\pi(a) \quad\text{for}\quad a\in\Sc(\RR^n\times\RR^n).$$
Thus the classical Weyl pseudo-differential calculus on $\RR^n$ 
can be directly obtained as a special calculus 
of the Weyl calculus of \cite{Pe94} described in Definition~\ref{calc_def} above. 
\qed
\end{remark}

\section{Magnetic Weyl calculus on nilpotent Lie groups}\label{Sect4}

The magnetic pseudo-differential Weyl calculus on ${\RR}^n$ 
developed in \cite{MP04}, \cite{IMP07} and other works was 
motivated by problems in quantum mechanics.  
In the present section we describe results from \cite{BB09a} 
(see also \cite{BB09b}) which show that these constructions 
can be extended to any simply connected nilpotent Lie group and 
can be related to the program of Weyl quantization for 
coadjoint orbits (see e.g., \cite{Wi89} and \cite{Ca07}) 
and to the Weyl calculus for irreducible unitary representations 
discussed in Section~\ref{Sect3}. 

A magnetic potential on a Lie group~$G$ is simply a 1-form $A\in\Omega^1(G)$, 
and the corresponding magnetic field is $B=dA\in\Omega^2(G)$. 
The purpose of a magnetic pseudo-differential calculus on $G$ is to facilitate the investigation 
on first-order linear differential operators of the form 
$-\ie P_0+A(Q)P_0$, 
where 
$P_0$ is a right invariant vector field on $G$ and $A(Q)P_0$ stands 
for the operator defined by the multiplication by the function 
obtained by applying the (non-invariant) 1-form $A$ 
to the vector field $P_0$ at every point in~$G$. 
In the special case of the abelian group $G=({\RR}^n,+)$ 
we have $A=A_1\de x_1+\dots+A_n\de x_n\in\Omega^1({\RR}^n)$ 
and the above operators 
are precisely the linear partial differential operators 
determined by vectors $P_0=(p_1,\dots,p_n)\in{\RR}^n$, 
\begin{equation*}
\ie\Bigl(p_1\frac{\partial}{\partial x_1}+\cdots+ p_n\frac{\partial}{\partial x_n}\Bigr)
+\Bigl(p_1A_1(Q)+\cdots+p_nA_n(Q)\Bigr)
=\sum_{j=1}^np_j\Bigl(\ie \frac{\partial}{\partial x_j}+A_j(Q)\Bigr)
\end{equation*}
where we denote by $A_1(Q),\dots,A_n(Q)$ the operators of multiplication 
by the coefficients of the 1-form~$A$. 
In the non-magnetic case (i.e., $dA=B=0$), 
we get precisely the operators involved in the classical Weyl calculus 
on ${\RR}^n$ discussed in Section~\ref{Sect2}.

\subsection{Magnetic Weyl calculus}

\begin{setting}\label{o1}
\normalfont 
Let us summarize the framework for the present section. 
\begin{enumerate}
\item A connected, simply connected, nilpotent Lie group $G$ is 
identified to its Lie algebra $\gg$ by means of the exponential map. 
We denote by $\ast$ the Baker-Campbell-Hausdorff multiplication 
on $\gg$, so that $G=(\gg,\ast)$. 
\item 
The cotangent bundle $T^*G$ is a trivial bundle and 
we perform the identification 
\begin{equation*}
T^*G\simeq\gg\times\gg^*
\end{equation*}
by using the trivialization by left translations. 
\item $\Fc$ is an admissible function space on the Lie group $G$ 
(see  Def.~2.8 in \cite{BB09a});  
in particular, $\Fc$ is invariant under translations to the left on $G$ 
and is endowed with a locally convex topology 
such that we have continuous inclusion mappings  $\gg^*\hookrightarrow \Fc\hookrightarrow\Ci(G)$. 
For instance $\Fc$ can be the whole space $\Ci(G)$ or the space $\Cpol(G)$ 
of smooth functions with polynomial growth. 
See however \cite{BB09b}  for specific situations when $\dim\Fc<\infty$.
\item The semidirect product $M=\Fc\rtimes_\lambda G$ is an infinite-dimensional Lie group 
in general, whose Lie algebra is $\mg=\Fc\rtimes_{\dot\lambda}\gg$. 
We refer to \cite{Ne06} for basic facts on infinite-dimensional Lie groups. 
\item We endow $\gg$ and its dual space $\gg^*$ with Lebesgue measures 
suitably normalized such that the Fourier transform $L^2(\gg)\to L^2(\gg^*)$ 
is a unitary operator, and we denote $\Hc=L^2(\gg)$. 
\item\label{o1_item6} 
We define a unitary representation $\pi\colon M\to\Bc(\Hc)$ by 
$$(\pi(\phi,X)f)(Y)=\ee^{\ie\phi(Y)}f((-X)\ast Y)$$
for $(\phi,X)\in M$, $f\in\Hc$, and $Y\in\gg$. 
\item The \emph{magnetic potential} $A\in\Omega^1(G)$ is 
a smooth differential 1-form whose coefficients belong to~$\Fc$. 
That is, a smooth mapping $A\colon\gg\to\gg^*$, $X\mapsto A_X$, with polynomial growth 
such that for every $X\in\gg$ we have $\langle A_\bullet,(R_\bullet)'_0X\rangle\in\Fc$. 
The corresponding \emph{magnetic field} is the 2-form $B=dA\in\Omega^2(G)$. 
Hence $B$ is a smooth mapping $X\mapsto B_X$ from~$\gg$ into the space of 
all skew-symmetric bilinear functionals on $\gg$ 
such that 
$$(\forall X,X_1,X_2\in\gg)\quad 
B_X(X_1,X_2)=\langle A'_X(X_1),X_2\rangle-\langle A'_X(X_2),X_1\rangle.$$
\item We also need the mappings 
$$\theta_0\colon\gg\times\gg^*\to\Fc, \quad 
\theta_0(X,\xi)=\xi+\langle A_\bullet,(R_\bullet)'_0X\rangle$$
and 
$$\theta\colon\gg\times\gg^*\to\mg, \quad(X,\xi)\mapsto(\theta_0(X,\xi),X).$$ 
Here $R_Y\colon\gg\to\gg$, $Z\mapsto Z\ast Y$, is the translation to the right 
defined by any $Y\in\gg$. 
\end{enumerate}
We refer to \cite{BB09a} for more details on these notions.
\qed
\end{setting}

\begin{remark}\label{sympl_fourier}
\normalfont
Let $\Xi:= \gg\times \gg^*$ and denote  the duality between $\gg$ and $\gg^*$ also by 
$$\gg^* \times \gg\to{\RR},\quad (\xi,X)\mapsto \hake{\xi,X}.$$ 
The mapping
$ \hake{\cdot,\cdot}\colon \Xi\times \Xi\to {\RR}, \quad 
\scalar{(X_1, \xi_1)}{ (X_2, \xi_2)}= \scalar{\xi_1}{X_2}- \scalar{\xi_2}{X_1} $ 
defines a symplectic structure on $\Xi$.  
The corresponding Fourier transform is given by 
$$ (F_\Xi a)(X, \xi) =\hat a(X, \xi) = \int\limits_{\Xi} \ee^{-\ie \scalar{(X, \xi)}{ (Y, \eta)}} 
a(Y, \eta) \, \de(Y, \eta), \quad a \in L^1(\Xi).
$$
This transform extends to an invertible operator $\Sc'(\Xi) \to \Sc' (\Xi)$, 
$F_\Xi^{-1}= F_\Xi$ 
 and we denote 
$\check a= F_{\Xi}^{-1}a$. 
\qed
\end{remark}

In the present framework we can make the following definition 
similar to Definition~\ref{calc_def}. 
To emphasize the close relationship between these two constructions, 
let us mention that the representation~$\pi$ introduced in Setting~\ref{o1}\eqref{o1_item6} 
is naturally associated with a certain \emph{finite-dimensional} coadjoint orbit $\Oc$ 
of the infinite-dimensional Lie group $M=\Fc\rtimes_\lambda G$ 
and there exists a canonical symplectomorphism $\Oc\simeq T^*G$. 
(See Prop.~2.9 and subsect~2.4 in \cite{BB09a}.)

\begin{definition}\label{magn_calc}
\normalfont 
For every $a\in\Sc(\gg\times\gg^*)$  
there exists a linear operator $\Op^A(a)$ 
in $L^2(\gg)$ defined by 
\begin{equation}\label{weyl_loc_special}
\Op^A(a)f
=\int\limits_{\Xi} \check{a}(X, \xi)\pi(\exp_M\theta(X, \xi))f\,\de(X, \xi)
\end{equation}
We will call $\Op^A(a)$  
a \emph{magnetic pseudo-differential operator} with respect to 
the magnetic potential~$A$. 
The function $a$ is the \emph{magnetic Weyl symbol} of the pseudo-differential operator $\Op^A(a)$, 
and the \emph{Weyl calculus with respect to the magnetic potential $A$} is 
the mapping $\Op^A$ which takes a function $a\in\Sc(\gg\times\gg^*)$ 
into the corresponding pseudo-differential operator. 
By using a duality reasoning (see \cite{BB09b}), 
one can extend this definition to every tempered distribution $a\in\Sc'(\gg\times\gg^*)$ 
in order to get a continuous linear pseudo-differential operator $\Op^A(a)\colon\Sc(\gg)\to\Sc'(\gg)$. 
\qed
\end{definition}

\begin{remark}\label{magn_pred}
 \normalfont
The role of $\gg_e$ of Definition~\ref{calc_def} is played here by the 
\emph{magnetic predual} 
$$\Oc_*^A:=\{(\bar\theta_0^A(\xi,X),X)\mid X\in\gg,\xi\in\gg^*\}
\subseteq\Fc\rtimes_{\dot\lambda}\gg=\mg$$ 
for the orbit $\Oc$. 
The set $\Oc_*^A$ is just a ``copy'' of $\Oc$ contained in the Lie algebra~$\mg$ of 
the infinite-dimensional Lie group $M$. 
In the general case, if two magnetic potentials give rise to the same magnetic field, 
then the corresponding copies of $\Oc$ in the Lie algebra $\mg$ 
are moved to each other by the adjoint action of the Lie group~$M$ 
(see Rem.~3.5 in \cite{BB09a}). 
This leads to the \emph{gauge covariance} (Theorem~\ref{main1}\eqref{main1_item1_1/2} below) 
of the pseudo-differential calculus 
which can also be described  by the formula 
$$\Op^A(a)f=\int\limits_{\Oc_*}\check{a}(v)\pi(\exp_M v)f\,\de v$$
obtained from \eqref{weyl_loc_special} 
after the change of variables $v=\theta(x,\xi)$.  
\qed
\end{remark}

\begin{theorem}\label{main1}
The Weyl calculus $\Op^A$ has the following properties: 
\begin{enumerate}
\item\label{main1_item0} 
For $P_0\in\gg$ let us denote by $A(Q)P_0$ the multiplication operator defined by 
the function $Y\mapsto\scalar{A_Y}{(R_Y)'_0P_0}$. 
Then the usual functional calculus for the self-adjoint operator 
$-\ie\dot{\lambda}(P_0)+ A(Q)P_0$ in $L^2(\gg)$  
can be recovered from $\Op^A$.
\item\label{main1_item1_1/2} 
Gauge covariance with respect to the magnetic potential $A$: 
If $A_1\in\Omega^1(\gg)$ is another magnetic potential with
$dA=dA_1\in\Omega^2(\gg)$ 
and the function $Y\mapsto\langle A_Y,(R_Y)'_0X\rangle$ belongs to $\Fc$ for every $X\in\gg$, 
then there exists 
$\psi\in\Fc$ such that unitary operator $U\colon L^2(\gg)\to L^2(\gg)$ 
defined by the multiplication by the function $\ee^{\ie\psi}$ satisfies the condition
$U\Op^A(a)U^{-1}=\Op^{A_1}(a)$ for every symbol $a\in\Sc(\gg\times\gg^*)$. 
 \item\label{main1_item1}
 If $\Cpol(\gg)\subseteq\Fc$ and the function $Y\mapsto\langle A_Y,(R_Y)'_0X\rangle$ belongs to $\Cpol(\gg)$ 
for every vector $X\in\gg$, 
then
for every symbol $a\in\Sc(\gg\times\gg^*)$ the magnetic pseudo-differential operator $\Op^A(a)$ 
is  bounded linear on $L^2(\gg)$ and is defined by an integral kernel 
$K_a\in\Sc(\gg\times\gg)$ given by formula 
\begin{equation*}
K_a(X,Y)=\alpha_A(X,Y)\int\limits_{\gg^*}
\ee^{\ie\hake{\xi,X\ast (-Y)}}a\Bigl( \int \limits_0^1 (s(Y\ast (-X))) \ast X \, \de s,\xi \Bigr)\,\de\xi
\end{equation*}
were we have used the notation 
\begin{equation*}
\alpha_A(X,Y) = 
\exp\Bigl({\ie \int\limits_0^1 \scalar{A((s(Y\ast(-X)))\ast X)}{(R_{(s(Y\ast(-X)))\ast X})'_0(X\ast (-Y))} \, \de s}\Bigr)
\end{equation*} 
for every $X,Y\in\gg$. 
\item\label{main1_item2} 
Under the hypothesis of the above Assertion~\ref{main1_item1}., 
the correspondence  $a\mapsto K_a$ is an isomorphism of Fr\'echet spaces 
$\Sc(\gg\times\gg^*)\to\Sc(\gg\times\gg)$
and extends to a unitary operator 
$L^2(\gg\times\gg^*)\to L^2(\gg\times\gg)$. 
\item If the distribution $a\in\Sc'(\gg\times\gg^*)$ takes real values on the real valued functions, 
then the pseudo-differential operator $\Op^A(a)$ is symmetric, 
in the sense that its distribution kernel $K_a\in\Sc'(\gg\times\gg)$ has 
the following symmetry property: 
$$(\forall f,\phi\in\Sc(\gg))\quad 
\hake{K_a,f\otimes\bar{\phi}}=\overline{\hake{K_a,\phi\otimes\bar{f}}}. $$
\end{enumerate}
\end{theorem}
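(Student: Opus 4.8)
The plan is to use the observation made just before Definition~\ref{magn_calc}: $\Op^A$ is an instance of the Weyl calculus of Section~\ref{Sect3}, only for the \emph{infinite}-dimensional group $M=\Fc\rtimes_\lambda G$ and the \emph{finite}-dimensional coadjoint orbit $\Oc\simeq T^*G$ attached to the representation $\pi$ of Setting~\ref{o1}\eqref{o1_item6}. Thus every assertion is obtained by inserting the explicit formula for $\pi$ into \eqref{weyl_loc_special} and computing a few Baker--Campbell--Hausdorff products in $\mg=\Fc\rtimes_{\dot\lambda}\gg$. For assertion~\ref{main1_item0}, fix $P_0\in\gg$; since $(R_\bullet)'_0$ is linear, $\theta_0$ is a linear map $\gg\times\gg^*\to\Fc$, so $t\mapsto\theta(tP_0,0)=t\,\theta(P_0,0)$ is a one-dimensional subalgebra of $\mg$ and $t\mapsto\exp_M\theta(tP_0,0)$ a one-parameter subgroup of $M$. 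Differentiating the formula $(\pi(\phi,X)f)(Y)=\ee^{\ie\phi(Y)}f((-X)\ast Y)$ at $t=0$ identifies $\de\pi(\theta(P_0,0))=\ie D$ with $D=-\ie\dot\lambda(P_0)+A(Q)P_0$, so Stone's theorem gives $\pi(\exp_M\theta(tP_0,0))=\ee^{\ie tD}$. Now the symplectic Fourier transform of the tempered distribution $a(X,\xi)=g(\hake{\xi,P_0})$ is supported on the line $\{(tP_0,0)\mid t\in\RR\}$, so for $g$ with integrable Fourier transform \eqref{weyl_loc_special} reduces to $\int_\RR\check g(t)\,\ee^{\ie tD}\,\de t=g(D)$; the full Borel functional calculus then follows by the spectral theorem and approximation.

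For the gauge covariance in assertion~\ref{main1_item1_1/2}, note that $G=(\gg,\ast)$ is diffeomorphic to a vector space, hence contractible, so the closed $1$-form $A_1-A$ is exact: $A_1-A=\de\psi$ with $\psi\in\Fc$ (the admissibility axioms of Def.~2.8 in \cite{BB09a} ensure a primitive can be chosen in $\Fc$), and $\psi$ is real since $A,A_1$ are. Then $U:=$ multiplication by $\ee^{\ie\psi}$ is unitary on $L^2(\gg)$ and equals $\pi(\exp_M(\psi,0))$. A direct computation with $\ad(\psi,0)$, which squares to zero on $\mg$, shows $\Ad_M(\exp_M(\psi,0))\theta(X,\xi)=\theta^{A_1}(X,\xi)$, the analogue of $\theta$ for $A_1$ (this is Rem.~3.5 in \cite{BB09a}). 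Feeding this into \eqref{weyl_loc_special} written for $A_1$ and using that $\exp_M$ intertwines the adjoint action of $M$ on $\mg$ with conjugation in $M$ gives $\Op^{A_1}(a)f=\int_\Xi\check a(X,\xi)\,U\pi(\exp_M\theta(X,\xi))U^{-1}f\,\de(X,\xi)=U\Op^A(a)U^{-1}f$, with the measure $\de(X,\xi)$ on $\Xi$ untouched.

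Assertions~\ref{main1_item1} and~\ref{main1_item2} are the computational core. Substituting the formula for $\pi$ into \eqref{weyl_loc_special} reduces the kernel of $\Op^A(a)$ to an oscillatory integral over $\gg^*$; the only nontrivial input is the value of $\exp_M(\theta_0(X,\xi),X)$ in $\mg$, whose $\gg$-component is governed by the BCH product of $G$ and whose $\Fc$-component involves a line integral of $A$ along the BCH path --- which is where the curve $s\mapsto(s(Y\ast(-X)))\ast X$ and the factor $\alpha_A$ appear. Carrying out the $\xi$-integration (a partial Fourier transform) one gets $K_a=\alpha_A\cdot(\widetilde a\circ\Phi)$, where $\widetilde a(W,Z)=\int_{\gg^*}\ee^{\ie\hake{\xi,Z}}a(W,\xi)\,\de\xi$ --- the map $a\mapsto\widetilde a$ being a topological isomorphism $\Sc(\gg\times\gg^*)\to\Sc(\gg\times\gg)$ and a unitary $L^2\to L^2$ --- while $\Phi(X,Y)=\bigl(\int_0^1(s(Y\ast(-X)))\ast X\,\de s,\;X\ast(-Y)\bigr)$ is a polynomial diffeomorphism of $\gg\times\gg$; it is triangular with respect to the Jordan--H\"older filtration, so its Jacobian is a constant of absolute value one, whence $\widetilde a\mapsto\widetilde a\circ\Phi$ preserves $\Sc(\gg\times\gg)$ and is unitary on $L^2$; and multiplication by $\alpha_A$, which together with its inverse $\overline{\alpha_A}$ lies in $\Cpol(\gg\times\gg)$ and has modulus one, preserves $\Sc(\gg\times\gg)$ and is unitary on $L^2$. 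Composing the three maps gives $K_a\in\Sc(\gg\times\gg)$, hence $\Op^A(a)$ is bounded (indeed Hilbert--Schmidt), and shows $a\mapsto K_a$ to be the asserted Fr\'echet isomorphism extending to a unitary of $L^2$-spaces.

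For the symmetry statement, the linearity of $\theta$ noted above yields $(\exp_M\theta(X,\xi))^{-1}=\exp_M\theta(-X,-\xi)$; combined with the unitarity of $\pi$, the substitution $(X,\xi)\mapsto(-X,-\xi)$ in \eqref{weyl_loc_special}, and the identity $\overline{\check a(-X,-\xi)}=\check{\bar a}(X,\xi)$ (from $F_\Xi^{-1}=F_\Xi$ in Remark~\ref{sympl_fourier}), this gives the analogue of Theorem~\ref{pedersen}\eqref{pedersen_item2a}, namely $\Op^A(\bar a)=\Op^A(a)^*$, which extends from $\Sc(\gg\times\gg^*)$ to $\Sc'(\gg\times\gg^*)$ by the duality argument of \cite{BB09b}. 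If $a$ takes real values on real-valued functions then $a=\bar a$, so $\Op^A(a)=\Op^A(a)^*$; since the distribution kernel of the formal adjoint is $(X,Y)\mapsto\overline{K_a(Y,X)}$, this operator identity is precisely the asserted relation $\hake{K_a,f\otimes\bar\phi}=\overline{\hake{K_a,\phi\otimes\bar f}}$. The step I expect to be the main obstacle is the explicit evaluation of $\exp_M$ (and of the adjoint action) in the infinite-dimensional Lie algebra $\mg$ needed for assertion~\ref{main1_item1}: one must track the right-translation derivatives $(R_\bullet)'_0$ carefully enough that the BCH path is identified with $s\mapsto(s(Y\ast(-X)))\ast X$ and the magnetic contribution condenses into the line integral defining $\alpha_A$. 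Once this is done, the remaining manipulations are formal and parallel those of Section~\ref{Sect3}.
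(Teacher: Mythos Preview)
The paper is a survey, and its entire proof of this theorem is the sentence ``See \cite{BB09a} and \cite{BB09b}.'' There is therefore no argument in the paper to compare against; your sketch is a correct and faithful outline of the strategy actually carried out in those references---computing $\exp_M$ in the semidirect product $\Fc\rtimes_\lambda G$ (which produces the line integral $\alpha_A$), then factoring $a\mapsto K_a$ as a partial Fourier transform followed by composition with a unipotent polynomial diffeomorphism and multiplication by the unimodular $\alpha_A$, with gauge covariance coming from the adjoint action exactly as in Rem.~3.5 of \cite{BB09a}. The point you flag as the main obstacle is indeed where the work lies in \cite{BB09a}: the explicit formula $\exp_M(\phi,X)=\bigl(\int_0^1\lambda(\exp_G(sX))\phi\,\de s,\exp_G X\bigr)$ for the exponential of a semidirect product with abelian normal factor is what identifies the path $s\mapsto(s(Y\ast(-X)))\ast X$ and makes the magnetic phase appear.
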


\begin{proof}
See \cite{BB09a} and \cite{BB09b}. 
\end{proof}

\subsection{Magnetic Moyal product}

\begin{definition}\label{diez}
\normalfont
Let us assume that $\Cpol(\gg)\subseteq\Fc$ and $A\in\Omega^1(\gg)$ 
has the property that 
the function $Y\mapsto\langle A_Y,(R_Y)'_0X\rangle$ belongs to $\Cpol(\gg)$ 
for every $X\in\gg$. 
It follows by Theorem~\ref{main1}\eqref{main1_item1} that 
for every $a_1,a_2\in\Sc(\gg\times\gg^*)$ there exists a unique function 
$a_1\#^A  a_2\in\Sc(\gg\times\gg^*)$ such that 
$\Op^A(a_1)\Op^A(a_2)=\Op^A(a_1\#^A  a_2)$ 
and the \emph{magnetic Moyal product}
$$\Sc(\gg\times\gg^*)\times\Sc(\gg\times\gg^*)\to \Sc(\gg\times\gg^*),\quad 
(a_1,a_2)\mapsto a_1\#^A  a_2 $$
is a bilinear continuous mapping. 
\qed
\end{definition}

\begin{theorem}\label{2step_th}
Let $\gg$ be a two-step nilpotent Lie algebra, 
that is, $[\gg,[\gg,\gg]]=\{0\}$. 
If $\Cpol(\gg)\subseteq\Fc$ and $A\in\Omega^1(\gg)$ 
is a magnetic potential with $\langle A(\cdot),X+\frac{1}{2}[X,\cdot]\rangle\in\Cpol(\gg)$ for every $X\in\gg$,  
then the following assertions hold: 
\begin{enumerate}
\item\label{2step_th_item1} 
For every $a\in\Sc(\gg\times\gg^*)$ 
the integral kernel of the bounded linear operator 
$\Op^A(a)\colon L^2(\gg)\to L^2(\gg)$ 
is given by the formula 
\begin{equation*}
K_a(X,Y)=\alpha_A(X,Y)\int\limits_{\gg^*}
\ee^{\ie\hake{\xi,X\ast (-Y)}}a\Bigl(\frac{1}{2}(X+Y),\xi \Bigr)\,\de\xi
\end{equation*}
where for arbitrary $X,Y\in\gg$ we have denoted 
\begin{equation*}
\alpha_A(X,Y) = 
\exp\Bigl(-{\ie \int\limits_0^1 \scalar{A(sY+(1-s)X)}{Y-X} \, \de s}\Bigr).
\end{equation*}
\item\label{2step_th_item2} 
Set 
$$\beta_A\colon\gg\times\gg\times\gg\to\CC,\quad 
\beta_A(X,Y,Z)=\alpha_A^{-1}(X,Y)\alpha_A(Y,Z)\alpha_A(Z,X). $$
If $a,b\in\Sc(\gg\times\gg^*)$ then for every $(X,\xi)\in\gg\times\gg^*$ 
we have 
\begin{equation*}
\begin{aligned}
(a\#^A b)(X,\xi)
=\iiiint\limits_{\gg\times\gg\times\gg^*\times\gg^*}
&a(Z,\zeta)b(T,\tau)\ee^{2 \ie \scalar{(Z-X,\zeta-\xi)}{(T-X,\tau-\xi)}} \\
&\times\ee^{-\ie (\scalar{\xi+\zeta}{[X,Z]}+ \scalar{\zeta+\tau}{[Z,T]}+
\scalar{\tau+\xi}{[T,X]})} \\
&\times \beta_A(Z-T+X, T-Z+X,Z+T-X)
\,\de Z\de T\de\zeta\de\tau.
\end{aligned}
\end{equation*} 
\end{enumerate}
\end{theorem}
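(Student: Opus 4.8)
The plan is to derive both formulas of Theorem~\ref{2step_th} by specializing the general two-step case of Theorem~\ref{main1} and then composing two operators explicitly. First I would record the Baker--Campbell--Hausdorff product on a two-step nilpotent Lie algebra: since $[\gg,[\gg,\gg]]=\{0\}$ we have $X\ast Y=X+Y+\tfrac12[X,Y]$, hence $X\ast(-Y)=X-Y+\tfrac12[X,-Y]=X-Y-\tfrac12[X,Y]$ and $Y\ast(-X)=Y-X-\tfrac12[Y,X]=Y-X+\tfrac12[X,Y]$. Plugging these into the chart argument of Theorem~\ref{main1}\eqref{main1_item1} one gets
\[
\bigl(s(Y\ast(-X))\bigr)\ast X
= s\bigl(Y-X+\tfrac12[X,Y]\bigr)+X+\tfrac12\bigl[s(Y\ast(-X)),X\bigr]
= sY+(1-s)X + (\text{terms in }[\gg,\gg]).
\]
A short computation shows the bracket corrections cancel the $\tfrac12 s[X,Y]$, leaving $sY+(1-s)X$; integrating over $s\in[0,1]$ gives $\int_0^1(s(Y\ast(-X)))\ast X\,\de s=\tfrac12(X+Y)$, which is exactly the first argument of $a$ in Assertion~\eqref{2step_th_item1}. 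Simultaneously, in $\alpha_A$ one substitutes $(s(Y\ast(-X)))\ast X\mapsto sY+(1-s)X$ and evaluates $(R_{\,\cdot\,})'_0(X\ast(-Y))$ at that point; since right translation by $Z$ on a two-step group has derivative $(R_Z)'_0W=W+\tfrac12[W,Z]$, pairing $A$ against it and tracking signs turns $\ie\int_0^1\langle\cdots\rangle\de s$ into $-\ie\int_0^1\langle A(sY+(1-s)X),Y-X\rangle\de s$, which is the stated $\alpha_A$. This proves Assertion~\eqref{2step_th_item1}.

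For Assertion~\eqref{2step_th_item2} the strategy is the standard ``kernel-of-a-product'' computation. By Definition~\ref{diez} the symbol $a\#^A b$ is characterized by $\Op^A(a)\Op^A(b)=\Op^A(a\#^A b)$, so I would compute the kernel $K_{a\#^A b}$ as the operator product of kernels, $K_{a\#^A b}(X,Y)=\int_\gg K_a(X,Z)K_b(Z,Y)\,\de Z$, using the formula from part~\eqref{2step_th_item1} for each factor. Writing both kernels as Fourier integrals over $\gg^*$ in variables $\zeta,\tau$, the cocycle phases multiply as $\alpha_A(X,Z)\alpha_A(Z,Y)$; to recover a symbol one inverts the kernel-to-symbol map, which amounts to an extra Fourier transform. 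The exponential factors $\ee^{\ie\langle\zeta,X\ast(-Z)\rangle}$, $\ee^{\ie\langle\tau,Z\ast(-Y)\rangle}$ together with the final Fourier phase $\ee^{-\ie\langle\xi,\cdot\rangle}$ combine, after the change of variables $X\ast(-Z)$, $Z\ast(-Y)$ expanded via BCH, into the symplectic exponent $\ee^{2\ie\langle(Z-X,\zeta-\xi),(T-X,\tau-\xi)\rangle}$ and the bracket-correction exponent $\ee^{-\ie(\langle\xi+\zeta,[X,Z]\rangle+\langle\zeta+\tau,[Z,T]\rangle+\langle\tau+\xi,[T,X]\rangle)}$; these two pieces are precisely the first two lines of the claimed formula. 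The product of three cocycles collapses to $\beta_A$ evaluated at the shifted points: from $K_aK_b$ one gets $\alpha_A(X,Z)\alpha_A(Z,Y)$, and inverting the kernel map for $a\#^A b$ contributes $\alpha_A(X,Y)^{-1}$, so after the substitution identifying the integration variables with $Z-T+X$, $T-Z+X$, $Z+T-X$ (the three corners of the relevant ``magnetic triangle'') one obtains exactly $\beta_A(Z-T+X,T-Z+X,Z+T-X)$.

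The main obstacle I anticipate is bookkeeping the change of variables and the phase consolidation in part~\eqref{2step_th_item2}: one must pass from the ``midpoint'' variables $\tfrac12(X+Z)$, $\tfrac12(Z+Y)$, $\tfrac12(X+Y)$ appearing in the three kernels to the symmetric variables $Z,T$ of the final formula, track how the BCH correction terms $\tfrac12[\cdot,\cdot]$ in each $X\ast(-Y)$ feed into the bracket exponent, and verify that the Jacobian of the multiple substitutions is $1$ (which it is, since all maps are unimodular affine/polynomial shifts on the nilpotent group). The symplectic exponent coefficient $2$ is the usual artifact of the midpoint normalization in the Weyl calculus, and getting its sign and the signs inside the $\beta_A$-argument right requires care. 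A clean way to organize this is to first do the computation with $A=0$ (where $\alpha_A\equiv1$, $\beta_A\equiv1$, and one recovers the non-magnetic Moyal product on $\Oc=T^*G$ as in Section~\ref{Sect3}), thereby fixing all the phase conventions, and then reinsert the cocycle $\alpha_A$ and observe that it enters multiplicatively and depends only on the base-point data, so it factors out as $\beta_A$ at the triangle vertices by the same reasoning as in Rem.~\ref{magn_pred}. Once the non-magnetic skeleton is verified, the magnetic decoration is essentially formal, so I would spend the bulk of the effort on the $A=0$ phase algebra.
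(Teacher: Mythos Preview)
Your approach is correct and is the natural one. The paper itself gives no proof here but defers to \cite{BB09a}, and the argument there proceeds exactly as you outline: specialize the general kernel formula of Theorem~\ref{main1}\eqref{main1_item1} to the two-step case via the BCH identity $X\ast Y=X+Y+\tfrac12[X,Y]$, which collapses the curve $s\mapsto(s(Y\ast(-X)))\ast X$ to the straight segment $sY+(1-s)X$ and reduces $(R_Z)'_0(X\ast(-Y))$ to $X-Y$, giving Assertion~\eqref{2step_th_item1}; then compose the two kernels and invert the kernel-to-symbol map to obtain the Moyal product formula. Your identification of the change-of-variables bookkeeping in part~\eqref{2step_th_item2} as the main labor is accurate, and the device of first verifying the $A=0$ case to fix all phase and sign conventions before reinserting the multiplicative cocycle $\alpha_A$ is exactly how one keeps the computation honest.
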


\begin{proof}
See \cite{BB09a}. 
\end{proof}


\subsection*{Acknowledgments} 
Partial financial support from the grant PNII - Programme ``Idei'' (code 1194) 
is acknowledged.


\begin{thebibliography}{99}


\bibitem[An69]
{An69} 
\textsc{R.F.V.~Anderson}, 
The Weyl functional calculus. 
\textit{J. Functional Analysis}
\textbf{4}(1969),  240--267.

\bibitem[An72]
{An72} 
\textsc{R.F.V.~Anderson}, 
The multiplicative Weyl functional calculus. 
\textit{J. Functional Analysis}
\textbf{9}(1972),  423--440.

\bibitem[BB09a]
{BB09a} 
\textsc{I.~Belti\c t\u a, D.~Belti\c t\u a}, 
Magnetic pseudo-differential Weyl calculus on nilpotent Lie groups. 
\textit{Ann. Global Anal. Geom.} \textbf{36} (2009) no. 3 , 293-322.  

\bibitem[BB09b]
{BB09b} 
\textsc{I.~Belti\c t\u a, D.~Belti\c t\u a}, 
Uncertainty principles for magnetic structures on certain coadjoint orbits. 
\textit{J. Geom. Phys.} (to appear).

\bibitem[BB09c]
{BB09c} 
\textsc{I.~Belti\c t\u a, D.~Belti\c t\u a}, 
Modulation spaces of symbols for representations of nilpotent Lie groups. 
\textit{Preprint}, arXiv:0908.3917v2 [math.AP].

\bibitem[Ca07]
{Ca07}
\textsc{B.~Cahen}, 
Weyl quantization for semidirect products. 
\textit{Differential Geom. Appl.} \textbf{25} (2007), no.~2, 177--190.

\bibitem[FG92]
{FG92}
\textsc{H.G.~Feichtinger, K.~Gr\"ochenig},
Gabor wavelets and the Heisenberg group: 
Gabor expansions and short time Fourier transform 
from the group theoretical point of view.  
In: \textit{Wavelets}, Wavelet Anal. Appl., 2,  
Academic Press, Boston, MA, 1992, pp.~359--397.

\bibitem[Gl07]
{Gl07} 
\textsc{P.~G\l owacki}, 
The Melin calculus for general homogeneous groups. 
\textit{Ark. Mat.} \textbf{45} (2007), no.~1, 31--48.

\bibitem[Fo89]
{Fo89}
\textsc{G.B.~Folland}, 
\emph{Harmonic Analysis in Phase Space}. 
Annals of Mathematics Studies, 122. Princeton University Press, Princeton, NJ, 1989.

\bibitem[dG06]
{dG06}
\textsc{M.~de Gosson}, 
\textit{Symplectic Geometry and Quantum Mechanics}. 
Operator Theory: Advances and Applications, 166. 
Advances in Partial Differential Equations (Basel). 
Birkh\"auser Verlag, Basel, 2006.

\bibitem[Gr01]
{Gr01}
\textsc{K.~Gr\"ochenig}, 
\emph{Foundations of Time-Frequency Analysis}. 
Applied and Numerical Harmonic Analysis. Birkh\"auser Boston, Inc., Boston, MA, 2001.  

\bibitem[GH99]
{GH99}
\textsc{K.~Gr\"ochenig, C.~Heil}, 
Modulation spaces and pseudodifferential operators. 
\textit{Integral Equations Operator Theory} \textbf{34} (1999), no.~4, 439--457. 

\bibitem[HN85]
{HN85}
\textsc{B.~Helffer, J.~Nourrigat}, 
\textit{Hypoellipticit\'e Maximale pour des Op\'erateurs Polyn\^omes de Champs de Vecteurs}.  
Progress in Mathematics, 58. Birkh\"auser Boston, Inc., Boston, MA, 1985.

\bibitem[Hor79]
{Hor79}
\textsc{L.~H\"ormander}, 
The Weyl calculus of pseudodifferential operators. 
\textit{Comm. Pure Appl. Math.} \textbf{32} (1979), no.~3, 360--444.

\bibitem[Hor07]
{Hor07}
\textsc{L.~H\"ormander}, 
\textit{The Analysis of Linear Partial Differential Operators}. III. 
Pseudo-differential operators. Reprint of the 1994 edition. 
Classics in Mathematics. Springer, Berlin, 2007.

\bibitem[How80]
{How80}
\textsc{R.~Howe}, 
On the role of the Heisenberg group in harmonic analysis. 
\textit{Bull. Amer. Math. Soc.} (N.S.) \textbf{3} (1980), no.~2, 821--843.

\bibitem[How84]
{How84} 
\textsc{R.E.~Howe},
A symbolic calculus for nilpotent groups.  
In: \textit{Operator Algebras and Group Representations} (Neptun, 1980), 
vol.~I, Monogr. Stud. Math., 17, Pitman, Boston, MA, 1984, pp.~254--277.

\bibitem[HRW84]
{HRW84}
\textsc{R.~Howe, G.~Ratcliff, N.~Wildberger}, 
Symbol mappings for certain nilpotent groups.  
In: \textit{Lie group representations} III (College Park, Md., 1982/1983),  Lecture Notes in Math., 1077, Springer, Berlin, 1984,pp.~288--320.

\bibitem[IMP07]
{IMP07}
\textsc{V.~Iftimie, M.~M\u antoiu, R.~Purice}, 
Magnetic pseudodifferential operators. 
\textit{Publ. Res. Inst. Math. Sci.} \textbf{43} (2007), no.~3, 585--623.

\bibitem[Kir62]
{Ki62}
\textsc{A.A.~Kirillov}, 
Unitary representations of nilpotent Lie groups. (Russian) 
\textit{Uspehi Mat. Nauk} \textbf{17} (1962), no.~4 (106), 57--110. 

\bibitem[KT03]
{KT03}
\textsc{A.E.~Krasowska, S.T.~Ali}, 
Wigner functions for a class of semi-direct product groups. 
\textit{J. Phys. A} \textbf{36} (2003), no.~11, 2801--2820.

\bibitem[Ma07]
{Ma07}
\textsc{J.-M.~Maillard}, 
Explicit star products on orbits of nilpotent Lie groups 
with square integrable representations. 
\textit{J. Math. Phys.} \textbf{48} (2007), no. 7, 073504. 

\bibitem[Ma91]
{Ma91}
\textsc{D.~Manchon}, 
Calcul symbolique sur les groupes de Lie nilpotents et applications.  
\textit{J. Funct. Anal.} \textbf{102} (1991), no.~1, 206--251.

\bibitem[Ma95]
{Ma95}
\textsc{D.~Manchon}, 
Op\'erateurs pseudodiff\'erentiels et 
repr\'esentations unitaires des groupes de Lie.  
\textit{Bull. Soc. Math. France} \textbf{123} (1995), no.~1, 117--138.

\bibitem[MP04]
{MP04}
\textsc{M.~M\u antoiu, R.~Purice}, 
The magnetic Weyl calculus. 
\textit{J. Math. Phys.} \textbf{45} (2004), no.~4, 1394--1417.

\bibitem[Me83]
{Me83}
\textsc{A.~Melin}, 
Parametrix constructions for right invariant differential operators on nilpotent groups. 
\textit{Ann. Global Anal. Geom.} \textbf{1} (1983), no.~1, 79--130.

\bibitem[Mi82]
{Mi82}
\textsc{K.G.~Miller}, 
Invariant pseudodifferential operators on two-step nilpotent Lie groups. 
\textit{Michigan Math. J.} \textbf{29} (1982), no.~3, 315--328. 

\bibitem[Mi86]
{Mi86}
\textsc{K.G.~Miller}, 
Invariant pseudodifferential operators on two-step nilpotent Lie groups. II. 
\textit{Michigan Math. J.} \textbf{33} (1986), no.~3, 395--401.

\bibitem[Ne06]
{Ne06} 
\textsc{K.-H.~Neeb}, 
Towards a Lie theory of locally convex groups. 
\textit{Japanese J. Math.} \textbf{1} (2006), no.~2, 291--468.

\bibitem[Pe84]
{Pe84}
\textsc{N.V.~Pedersen}, 
On the infinitesimal kernel of irreducible representations of nilpotent Lie groups. 
\textit{Bull. Soc. Math. France} \textbf{112} (1984), no.~4, 423--467. 

\bibitem[Pe88]
{Pe88}
\textsc{N.V.~Pedersen}, 
On the symplectic structure of coadjoint orbits of (solvable) Lie groups and applications.~I.  
\textit{Math. Ann.} \textbf{281} (1988),  no.~4, 633--669. 

\bibitem[Pe89]
{Pe89}
\textsc{N.V.~Pedersen}, 
Geometric quantization and the universal enveloping algebra of a nilpotent Lie group.  
\textit{Trans. Amer. Math. Soc.} \textbf{315} (1989),  no.~2, 511--563. 

\bibitem[Pe94]
{Pe94}
\textsc{N.V.~Pedersen}, 
Matrix coefficients and a Weyl correspondence for nilpotent Lie groups. 
\textit{Invent. Math.} \textbf{118} (1994), no.~1, 1--36. 

\bibitem[We28]
{We28}
\textsc{H.~Weyl}, 
\textit{Gruppentheorie und Quantenmechanik}. 
Verlag S.~Hirzel, Leipzig, 1928.

\bibitem[Wi89]
{Wi89}
\textsc{N.J.~Wildberger}, 
Convexity and unitary representations of nilpotent Lie groups. 
\textit{Invent. Math.} \textbf{98} (1989), no.~2, 281--292.

\end{thebibliography}
\end{document}